\declaretheorem[style = plain, numberwithin = section]{theorem}
\declaretheorem[style = plain,      sibling = theorem]{lemma}
\declaretheorem[style = plain,      sibling = theorem]{proposition}
\declaretheorem[style = definition, sibling = theorem]{definition}
\declaretheorem[style = definition, sibling = definition]{notation}
\declaretheorem[style = definition, sibling = theorem]{example}
\declaretheorem[style = definition, sibling = theorem]{problem}
\declaretheorem[style = remark,     sibling = theorem]{remark}
\numberwithin{equation}{section}
\newcommand{\N}{\mathbb{N}}   % Natural numbers
\newcommand{\Z}{\mathbb{Z}}   % Integers
\newcommand{\Q}{\mathbb{Q}}   % Rational numbers
\newcommand{\R}{\mathbb{R}}   % Real numbers
\newcommand{\C}{\mathbb{C}}   % Complex numbers
\newcommand{\T}{\mathbb{T}}   % Circle group
\newcommand{\ca}{$\mathrm{C}^*$-algebra}
\DeclareMathOperator{\rc}{rc}
\DeclareMathOperator{\covol}{covol}
\def\subgroup#1{{#1}_\Omega}
\title[Gabor frames over rational lattices]{Criteria for the existence of Schwartz Gabor frames over rational lattices}
\author{Ulrik Enstad}
\address{Department of Mathematics,
University of Oslo,
Moltke Moes vei 35,
0851 Oslo.}
\email{ubenstad@math.uio.no}
\author{Hannes Thiel}
\address{Department of Mathematical Sciences,
Chalmers University of Technology and University of Gothenburg, 
Chalmers Tvärgata 3, 
SE-412 96 Gothenburg.}
\email[]{hannes.thiel@chalmers.se}
\urladdr{www.hannesthiel.org}
\author{Eduard Vilalta}
\address{Department of Mathematical Sciences,
Chalmers University of Technology and University of Gothenburg, 
Chalmers Tvärgata 3, 
SE-412 96 Gothenburg.}
\email[]{vilalta@chalmers.se}
\urladdr{www.eduardvilalta.com}
\thanks{
UE was supported by The Research Council of Norway through project 314048. 
HT and EV were partially supported by the Knut and Alice Wallenberg Foundation (KAW 2021.0140). EV was also partially supported by Spanish Research State Agency (grant No. PID2020-
113047GB-I00/AEI/10.13039/501100011033) and by the Comissionat per Universitats i Recerca de la Generalitat de Catalunya (grant No. 2021-SGR-01015).
}
\begin{document}

\begin{abstract}
We give an explicit criterion for a rational lattice in the time-frequency plane to admit a Gabor frame with window in the Schwartz class. The criterion is an inequality formulated in terms of the lattice covolume, the dimension of the underlying Euclidean space, and the index of an associated subgroup measuring the degree of non-integrality of the lattice. For arbitrary lattices we also give an upper bound on the number of windows in the Schwartz class needed for a multi-window Gabor frame.
\end{abstract}

\maketitle

%==========================================================================================
%==========================================================================================
\section{Introduction}

Given a point $z = (x,\omega) \in \R^d \times \R^d \cong \R^{2d}$, denote by $\pi(z)$ the unitary operator on $L^2(\R^d)$ given by
\begin{equation}
\label{eq:time-frequency-shift}
(\pi(z)f)(t) 
= e^{2\pi i \langle x, t \rangle}f(t-x), 
\qquad \text{ for } f \in L^2(\R^d), \; t \in \R^d. 
\end{equation}

Given a lattice $\Gamma \subseteq \R^{2d}$ and $g \in L^2(\R^d)$, a sequence in $L^2(\R^d)$ of the form
\[ 
\pi(\Gamma)g = (\pi(\gamma)g)_{\gamma \in \Gamma} 
\]
is known as a \emph{Gabor system}, and the function $g$ is called the \emph{window} of the system.
The question of when the Gabor system $\pi(\Gamma)g$ is a frame, that is, when there exist $A,B > 0$ such that
\[ 
A \| f \|_2^2 \leq \sum_{\gamma \in \Gamma}|\langle f, \pi(\gamma)g \rangle|^2 \leq B \| f \|_2^2, 
\qquad \text{ for } f \in L^2(\R^d),
\]
is central to time-frequency analysis and related fields \cite{gr01}.

We will call a Gabor system $\pi(\Gamma)g$ \emph{Schwartz} if the window $g$ belongs to the Schwartz space $\mathcal{S}(\R^d)$ of rapidly decaying smooth functions. 
A version of the Balian--Low Theorem \cite{balian,low,feka04} states that a Schwartz Gabor system can be a frame only if $\covol(\Gamma) < 1$. 
This paper concerns the converse of the Balian--Low Theorem, namely the following:

%==========================================================================================
\begin{problem}
\label{prob:balian-low-converse}
Let $\Gamma$ be a lattice in $\R^{2d}$ with $\covol(\Gamma) < 1$. 
Does there exist a Schwartz function $g \in \mathcal{S}(\R^d)$ such that $\pi(\Gamma)g$ is a frame for $L^2(\R^d)$?
\end{problem}

%==========================================================================================
In dimension $d=1$ \Cref{prob:balian-low-converse} always has an affirmative answer: It is a consequence of the theorems of Lyubarskii, Seip and Wallsten \cite{se92,sewa92,ly92} that every lattice $\Gamma$ with $\covol(\Gamma) < 1$ admits a Schwartz Gabor frame. In fact, $g$ can be chosen to be the Gaussian function $g(t) = e^{-\pi t^2}$.

For every $d \geq 2$ \Cref{prob:balian-low-converse} is still unsolved, see for instance \cite[p.\ 225, Remarks~2]{gr11}. 
However, the problem has a reformulation in terms of comparison of projections in \ca{s} \cite{lu09,lu11,beenva22} that allows operator algebraic methods to be used. This was employed by Jakobsen--Luef \cite[Theorem~5.4]{jalu20}, using results of Rieffel \cite{ri88}, to give an affirmative answer to \Cref{prob:balian-low-converse} in the case of non-rational lattices. 
By definition a lattice $\Gamma \subseteq \R^{2d}$ is \emph{non-rational} if $\Omega(\gamma,\gamma') \notin \Q$ for some $\gamma,\gamma' \in \Gamma$, where $\Omega$ denotes the standard symplectic form on $\R^{2d}$ given by
\begin{equation}
\label{eq:symplectic-form}
\Omega((x,\omega),(x',\omega')) = \langle x, \omega' \rangle - \langle x', \omega \rangle, \qquad \text{ for } (x,\omega),(x',\omega') \in \R^{2d}. 
\end{equation}
Thus, the instance of \Cref{prob:balian-low-converse} that remains open is the case of \emph{rational} lattices, that is, those for which the symplectic forms of all pairs of lattice elements are rational.

The purpose of this paper is to show how comparison theory of projections in \ca{s} can be used to obtain a partial solution of \Cref{prob:balian-low-converse} for rational lattices.
For this, we consider the following invariant of a lattice in $\R^{2d}$:

%==========================================================================================
\begin{definition}
\label{dfn:nGamma}
Let $\Gamma \subseteq \R^{2d}$ be a lattice.
Consider the subgroup of $\Gamma$ given by
\begin{equation}
\label{eq:subgroup}
\subgroup{\Gamma} 
= \big\{ \gamma \in \Gamma : \Omega(\gamma,\gamma') \in \Z \text{ for all } \gamma' \in \Gamma \big\}.
\end{equation}
We define $n_\Gamma = [\Gamma : \subgroup{\Gamma}]^{1/2}$ if the index $[\Gamma : \subgroup{\Gamma}]$ is finite, and $n_\Gamma = \infty$ otherwise.
\end{definition}

%==========================================================================================
We prove in \Cref{prop:rational-lattice} that the index $[\Gamma : \subgroup{\Gamma}]$ is finite if and only if $\Gamma$ is rational.
Moreover, in this case we show in \Cref{prop:rational-torus-homogeneous} that the noncommutative $2d$-torus associated to $\Gamma$ is a $n_\Gamma$-homogeneous \ca{}, i.e. all its irreducible representations have dimension~$n_\Gamma$.
We therefore view $n_\Gamma$ as a measure of the non-integrality of $\Gamma$, that is, the non-integrality of the numbers $\Omega(\gamma,\gamma')$ for $\gamma,\gamma' \in \Gamma$.

Our main result provides a criterion for the existence of a Schwartz Gabor frame:

%==========================================================================================
\begin{theorem}
\label{thm:1}
Let $\Gamma \subseteq \R^{2d}$ be a lattice such that
\begin{equation}
\label{eq:main-inequality}
\covol(\Gamma) < 1 - \frac{d-1}{n_\Gamma}.
\end{equation}
Then there exists $g \in \mathcal{S}(\R^d)$ such that $\pi(\Gamma)g$ is a frame for $L^2(\R^d)$.
\end{theorem}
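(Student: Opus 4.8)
The plan is to reduce the statement to a question about finitely generated projective modules over the twisted group $\mathrm{C}^*$-algebra of $\Gamma$, and then to settle that question by an obstruction-theoretic dimension count that uses the homogeneity supplied by \Cref{prop:rational-torus-homogeneous}. First, if $\Gamma$ is not rational then, by \Cref{prop:rational-lattice}, $n_\Gamma = \infty$, so \eqref{eq:main-inequality} becomes $\covol(\Gamma) < 1$ and the conclusion is exactly the theorem of Jakobsen--Luef \cite{jalu20}, built on Rieffel \cite{ri88}; hence I may assume $\Gamma$ is rational, so $n_\Gamma < \infty$ and in particular $\covol(\Gamma)<1$. Next I would invoke the operator-algebraic dictionary for Gabor frames (Rieffel \cite{ri88}, Luef \cite{lu09,lu11}, Jakobsen--Luef \cite{jalu20}): the Schwartz space $\mathcal{S}(\R^d)$ is a finitely generated projective left module over a dense, spectrally invariant $^*$-subalgebra $\mathcal{A}_\Gamma$ of the twisted group $\mathrm{C}^*$-algebra $A_\Gamma = C^*(\Gamma, c_\Omega)$ ($c_\Omega$ the Heisenberg cocycle of $\Omega$), and there is $g \in \mathcal{S}(\R^d)$ with $\pi(\Gamma)g$ a frame precisely when this module is singly generated. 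By spectral invariance of $\mathcal{A}_\Gamma \hookrightarrow A_\Gamma$ (holomorphic functional calculus) this is equivalent, after completing, to the Hilbert $A_\Gamma$-module $E_\Gamma$ being isomorphic to $p A_\Gamma$ for a projection $p \in A_\Gamma$. So it suffices to produce such a $p$ under \eqref{eq:main-inequality}. (The same dictionary identifies the least number of Schwartz windows for a multi-window frame with the least $m$ such that $E_\Gamma$ is a direct summand of $A_\Gamma^m$, which is the source of the bound advertised in the abstract.)

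Now I would feed in the structure of $A_\Gamma$. By \Cref{prop:rational-torus-homogeneous}, $A_\Gamma$ is $n_\Gamma$-homogeneous, hence the section algebra of a locally trivial $M_{n_\Gamma}(\C)$-bundle over its spectrum $X$; moreover $X$ is a torus of dimension $2d$, being the dual group $\widehat{\subgroup{\Gamma}}$ of the finite-index (hence rank-$2d$) subgroup $\subgroup{\Gamma}\le\Gamma$, which generates the centre of $A_\Gamma$. The canonical trace $\tau$ on $A_\Gamma$, normalized by $\tau(1)=1$, is the average over $X$ of the fibrewise normalized matrix traces, and a standard computation (essentially the Janssen/density formula) gives $\tau(E_\Gamma) = \covol(\Gamma)$. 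Evaluating $E_\Gamma$ at a point of $X$ yields a module over $M_{n_\Gamma}(\C)$, so the fibres of $E_\Gamma$ have constant rank $k\,n_\Gamma$ with $k := n_\Gamma\covol(\Gamma) \in \{1,\dots,n_\Gamma-1\}$; the key observation is that \eqref{eq:main-inequality} says precisely
\[
\covol(\Gamma) < 1 - \frac{d-1}{n_\Gamma} \quad\Longleftrightarrow\quad n_\Gamma\covol(\Gamma) < n_\Gamma - (d-1) \quad\Longleftrightarrow\quad n_\Gamma - k \ge d,
\]
the last step using that $n_\Gamma - k$ is an integer. In the associated bundle picture, $A_\Gamma$ then corresponds to a complex vector bundle $\mathcal{P}$ of rank $n_\Gamma$ over $X$ and $E_\Gamma$ to a complex vector bundle $\mathcal{Q}$ of rank $k$ over $X$, and the existence of a projection $p$ with $E_\Gamma \cong p A_\Gamma$ is equivalent to $\mathcal{Q}$ embedding as a subbundle of $\mathcal{P}$. (One can phrase this intrinsically in terms of the $M_{n_\Gamma}(\C)$-bundle underlying $A_\Gamma$, so the Dixmier--Douady class plays no role.)

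The remaining, purely topological, step is to produce a bundle monomorphism $\mathcal{Q} \hookrightarrow \mathcal{P}$. Such a map is a section of the fibre bundle over $X$ whose fibre is the space of injective $\C$-linear maps $\C^k \to \C^{n_\Gamma}$, a space homotopy equivalent to the complex Stiefel manifold $V_k(\C^{n_\Gamma})$, which is $2(n_\Gamma - k)$-connected. Since $\dim X = 2d \le 2(n_\Gamma - k) + 1$ by the reformulation of \eqref{eq:main-inequality} obtained above, a standard obstruction-theory argument produces a section, hence the projection $p$, hence $g \in \mathcal{S}(\R^d)$ with $\pi(\Gamma)g$ a frame. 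Running the same argument with $\mathcal{P}^{\oplus m}$ in place of $\mathcal{P}$ shows that $m$ Schwartz windows suffice as soon as $2d \le 2(m n_\Gamma - k) + 1$, i.e.\ as soon as $m \ge \covol(\Gamma) + d/n_\Gamma$, which is how I would prove the multi-window statement.

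The main obstacle, I expect, is not the dimension count but assembling the dictionary of the first paragraph in a self-contained way: the equivalence between the existence of a Schwartz Gabor frame and the single generation of $E_\Gamma$ (at the level of the Schwartz class, not just a modulation space, and transferring between $\mathcal{A}_\Gamma$ and $A_\Gamma$), together with pinning the trace so that $\tau(E_\Gamma) = \covol(\Gamma)$ exactly — it is this normalization that makes \eqref{eq:main-inequality} coincide with the Stiefel connectivity bound $2d \le 2(n_\Gamma - k)+1$. The conceptual mechanism is otherwise rigid: $n_\Gamma$-homogeneity fixes the matrix size, the spectrum is a $2d$-torus, and the ``$d-1$'' appearing in \eqref{eq:main-inequality} is exactly $\lceil (2d-1)/2\rceil = d$ rewritten using the integrality of $n_\Gamma - n_\Gamma\covol(\Gamma)$.
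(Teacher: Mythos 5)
Your proposal is correct in substance, but it settles the decisive step by a genuinely different (more hands-on) route than the paper. Both arguments share the same skeleton: reduce via the Luef dictionary (\Cref{prop:gabor_frame_projection}) to showing $p_\Gamma \precsim 1$ in $C_\pi^*(\Gamma) \cong A_\theta$, and feed in $\tau(p_\Gamma) = \covol(\Gamma)$ together with, in the rational case, $n_\Gamma$-homogeneity with spectrum $\T^{2d}$ (\Cref{prop:rational-torus-homogeneous}). The paper then finishes abstractly: it quotes Toms' estimate $\rc(A_\theta) \le (d-1)/n_\Gamma$ (\Cref{prop:radius-homogeneous}) and the comparison lemma (\Cref{lem:radius}), and it treats the non-rational case uniformly through $\rc(A_\theta)=0$ (approximate divisibility) rather than by citing Jakobsen--Luef as you do. You instead unpack the comparison step directly: the fibre multiplicity $k = n_\Gamma\covol(\Gamma)$ is a constant integer, \eqref{eq:main-inequality} is equivalent to $n_\Gamma - k \ge d$, and a fibrewise injective module map exists by obstruction theory because the relevant fibre is homotopy equivalent to the Stiefel manifold $V_k(\C^{n_\Gamma})$, which is $2(n_\Gamma-k)$-connected, while $\dim \T^{2d} = 2d \le 2(n_\Gamma-k)+1$. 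This is essentially the topological mechanism hidden inside Toms' theorem, specialised to projections; it buys self-containedness and makes transparent where the ``$d-1$'' comes from (Stiefel connectivity plus integrality of $n_\Gamma - n_\Gamma\covol(\Gamma)$), at the cost of having to justify the Serre--Swan-type correspondence between projective modules and locally trivial module bundles, the passage from a fibrewise injection to a direct summand, and the point you correctly flag, namely that a possibly nontrivial $M_{n_\Gamma}(\C)$-bundle only twists the bundle of Stiefel fibres, which is harmless since the obstruction groups vanish in the relevant range. The paper's packaging via the radius of comparison is shorter and yields \Cref{thm:2} and the non-rational case with no additional work; your multi-window bound ($m \ge \covol(\Gamma) + d/n_\Gamma$) is comparable to, though not identical with, the one in \Cref{thm:2}.
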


%==========================================================================================
Thus, the closer the covolume of $\Gamma$ is to being~$1$, the more non-integrality of~$\Gamma$ is required for \eqref{eq:main-inequality} to hold. 
In the extreme case that $\Gamma$ is non-rational, we interpret $(d-1)/n_\Gamma$ as being zero, and then \Cref{thm:1} recovers the known partial solution to \Cref{prob:balian-low-converse} from \cite[Theorem~5.4]{jalu20}. 
Our result also recovers the known solution for $d=1$.

\Cref{thm:1} is a special case of the following result, which gives an upper bound on the number of windows required for a Schwartz multi-window Gabor frame over any lattice $\Gamma \subseteq \R^d$. Due to abstract principles \cite[Theorem~4.5]{lu09} or alternatively a compactness argument \cite[Proposition~6.4]{beenva22} this number is known to be finite, but the proof techniques of \cite{lu09,beenva22} give no control on the number of windows. 
\Cref{thm:2} on the other hand provides a concrete number in terms of $\covol(\Gamma)$, $n_\Gamma$, and~$d$. In the statement below, $\lfloor x \rfloor$ denotes the biggest integer smaller than or equal to $x$.

%==========================================================================================
\begin{theorem}
\label{thm:2}
Let $\Gamma \subseteq \R^{2d}$ be a lattice and set
\[ 
k = \left\lfloor \covol(\Gamma) + \frac{d-1}{n_\Gamma} \right\rfloor + 1. 
\]
Then there exist $g_1, \ldots, g_k \in \mathcal{S}(\R^d)$ such that $(\pi (\gamma) g_j)_{\gamma\in\Gamma , 1\leq j\leq k}$ is a $k$-multiwindow Gabor frame for $L^2(\R^d)$.
\end{theorem}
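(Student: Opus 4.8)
The plan is to recast the existence of a Schwartz $k$-window Gabor frame as a subequivalence of projections in the noncommutative $2d$-torus $A := C^*(\Gamma, c_\Gamma)$ attached to $\Gamma$, and then to settle that comparison using the homogeneous structure of $A$ supplied by \Cref{prop:rational-torus-homogeneous} together with an obstruction-theoretic argument for complex vector bundles over $\widehat{A}$. To begin, I would invoke the module-theoretic dictionary for Gabor analysis of Rieffel \cite{ri88}, Luef \cite{lu09,lu11}, Jakobsen--Luef \cite{jalu20} and \cite{beenva22}: associated to $\Gamma$ there is a Heisenberg module $\mathcal{E}_\Gamma$, a finitely generated projective left $A$-module whose canonical dense smooth submodule is $\mathcal{S}(\R^d)$. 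Combining the fundamental identity of Gabor analysis with the imprimitivity bimodule structure, a system $(\pi(\gamma)g_j)_{\gamma\in\Gamma,\,1\le j\le k}$ with each $g_j\in\mathcal{S}(\R^d)$ is a frame for $L^2(\R^d)$ precisely when $g_1,\dots,g_k$ generate $\mathcal{E}_\Gamma$ as a left $A$-module, which, $\mathcal{E}_\Gamma$ being projective, is the same as $\mathcal{E}_\Gamma$ being a direct summand of the free module $A^k$, that is, as $p_{\mathcal{E}_\Gamma}\precsim 1_{M_k(A)}$ (Cuntz subequivalence), where $p_{\mathcal{E}_\Gamma}\in M_N(A)$ represents $\mathcal{E}_\Gamma$. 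Here I would use two standard facts: that such a comparison, once available at the level of the \ca{} $A$, can be realised inside the spectral-invariant Schwartz subalgebra, so that the windows may indeed be taken in $\mathcal{S}(\R^d)$ (see \cite{jalu20}); and that the canonical trace $\tau$ on $A$ satisfies $\tau(p_{\mathcal{E}_\Gamma}) = \covol(\Gamma)$. It then remains to prove $p_{\mathcal{E}_\Gamma}\precsim 1_{M_k(A)}$ for $k = \lfloor\covol(\Gamma)+(d-1)/n_\Gamma\rfloor+1$.

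When $\Gamma$ is non-rational, $n_\Gamma=\infty$ and the claimed value is $k=\lfloor\covol(\Gamma)\rfloor+1$; here $A$ is a non-rational noncommutative torus, Rieffel's comparison results \cite{ri88} apply as in \cite{jalu20}, and $\tau(p_{\mathcal{E}_\Gamma})=\covol(\Gamma)<k$ yields the subequivalence, so this case reduces to a multi-window version of \cite[Theorem~5.4]{jalu20}. The new content is the rational case. There \Cref{prop:rational-lattice} makes $[\Gamma:\subgroup{\Gamma}]$ finite and \Cref{prop:rational-torus-homogeneous} makes $A$ an $n_\Gamma$-homogeneous \ca{}; thus $A$ is the section algebra of a locally trivial $M_{n_\Gamma}$-bundle over $X:=\widehat{A}=\mathrm{Prim}(A)$, which is the Pontryagin dual of $\subgroup{\Gamma}\cong\Z^{2d}$, so $X$ is a connected compact metrizable space with $\dim X\le 2d$. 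Since $X\cong\T^{2d}$ has torsion-free integral cohomology, the Dixmier--Douady obstruction vanishes, and $A$ is Morita equivalent to $C(X)$ through a rank-$n_\Gamma$ vector bundle $\mathcal{W}\to X$. Under this equivalence $\mathcal{E}_\Gamma$ corresponds to a complex vector bundle $F\to X$ and $A^k$ to $\mathcal{W}^{\oplus k}$; because $X$ is connected and $\tau(p_{\mathcal{E}_\Gamma})=\covol(\Gamma)$, the rank of $F$ is constant with $\rk F = n_\Gamma\covol(\Gamma)$, and in particular $n_\Gamma\covol(\Gamma)\in\N$.

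The relation $p_{\mathcal{E}_\Gamma}\precsim 1_{M_k(A)}$ is then equivalent to $F$ being isomorphic to a subbundle of $\mathcal{W}^{\oplus k}$, and I would establish the latter by obstruction theory applied to the fibre bundle over $X$ whose fibre over $x$ is the space of linear monomorphisms $F_x\hookrightarrow(\mathcal{W}^{\oplus k})_x$: this fibre is homotopy equivalent to the complex Stiefel manifold $U(n_\Gamma k)/U(n_\Gamma k-\rk F)$, which is $2(n_\Gamma k-\rk F)$-connected, so a global section — equivalently a subbundle embedding $F\hookrightarrow\mathcal{W}^{\oplus k}$ — exists as soon as $\dim X\le 2(n_\Gamma k-\rk F)$. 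Since $\dim X\le 2d$, it suffices that $n_\Gamma k\ge\rk F + d = n_\Gamma\covol(\Gamma)+d$, i.e.\ $k\ge\covol(\Gamma)+d/n_\Gamma$; and because $n_\Gamma\covol(\Gamma)\in\N$, the smallest integer with this property is exactly $\lceil\covol(\Gamma)+d/n_\Gamma\rceil=\lfloor\covol(\Gamma)+(d-1)/n_\Gamma\rfloor+1$, the asserted $k$. Reversing the Morita equivalence and the spectral-invariance passage then produces $g_1,\dots,g_k\in\mathcal{S}(\R^d)$ as required, and \Cref{thm:1} drops out as the instance $k=1$, which holds precisely when $\covol(\Gamma)+(d-1)/n_\Gamma<1$.

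I expect the crux to be this last step: pinning down the topological input — the bound $\dim X\le 2d$ on the primitive ideal space and, above all, the identity $\rk F = n_\Gamma\covol(\Gamma)$, which simultaneously shows that $n_\Gamma\covol(\Gamma)$ is a non-negative integer and makes the endpoints $n_\Gamma=\infty$ and $d=1$ collapse to the previously known results — and then running the vector-bundle embedding argument precisely enough to reach the sharp constant $(d-1)/n_\Gamma$ rather than a coarser estimate. The module-theoretic reduction of the first paragraph and the passage between $A$ and its Schwartz subalgebra, by contrast, are by now routine.
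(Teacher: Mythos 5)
Your proposal is correct in substance, and in the key (rational) case it takes a genuinely different route from the paper. The paper proves \Cref{thm:2} by combining the same module-theoretic dictionary (\Cref{prop:gabor_frame_projection}, $\tau(p_\Gamma)=\covol(\Gamma)$, \Cref{prop:traces-on-torus}) with the radius-of-comparison machinery: Toms' bound $\rc(A_\theta)\le (d-1)/n_\Gamma$ for $n_\Gamma$-homogeneous algebras with spectrum $\T^{2d}$ (\Cref{prop:radius-homogeneous}, \Cref{prop:radius-nc-tori}) and the abstract comparison criterion \Cref{lem:radius}, which handles the rational and non-rational cases uniformly in three lines. You instead unwind the homogeneous structure by hand: vanishing of the Dixmier--Douady obstruction over $\T^{2d}$ (torsion-free $H^3$) gives $C_\pi^*(\Gamma)\cong\Gamma(\mathrm{End}(\mathcal{W}))$ for a rank-$n_\Gamma$ bundle, Morita equivalence converts $p_\Gamma\precsim 1^{\oplus k}$ into embedding a bundle $F$ of rank $n_\Gamma\covol(\Gamma)$ into a rank-$n_\Gamma k$ bundle, and classical obstruction theory for sections of the Stiefel-manifold bundle yields the embedding once $n_\Gamma k\ge n_\Gamma\covol(\Gamma)+d$; your arithmetic correctly shows this threshold coincides with the paper's $k$, precisely because $n_\Gamma\covol(\Gamma)\in\Z$ in the rational case (this is essentially the content of Toms' estimate, so you are reproving the needed special case rather than citing it). What your route buys is topological transparency, an explicit integrality statement, and a self-contained replacement for the comparison black box; what it costs is extra steps the paper avoids (Dixmier--Douady vanishing, the identification of the canonical trace with the integrated normalized fibre trace so that $\rk F=n_\Gamma\covol(\Gamma)$, and the transfer of subequivalence across the Morita equivalence), plus a separate treatment of the non-rational case, which you delegate to Rieffel/Jakobsen--Luef while the paper derives it from strict comparison via approximate divisibility (\Cref{prop:radius-nc-tori}(1)). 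These delegated steps are all standard and correct, so I see no genuine gap, only points that would need to be written out with citations in a full proof.
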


Note that \Cref{thm:2} also gives a (less optimal) bound independent of $[\Gamma : \subgroup{\Gamma}]$, namely $k = \lfloor \covol(\Gamma) \rfloor + d$.

%==========================================================================================
%==========================================================================================
\section{Rational lattices}\label{sec:rational_lattices}

Denote by $\mathrm{GL}(2d,\R)$ the set of real, invertible $2d \times 2d$ matrices and by $\mathrm{GL}(2d,\Z)$ the subset of integer matrices with determinant equal to $\pm 1$. Let
\[ J = \begin{pmatrix} 0_{d} & I_{d} \\ -I_{d} & 0_{d} \end{pmatrix} \]
denote the standard symplectic $2d \times 2d$ matrix, where $0_d$ denotes a $d \times d$ zero matrix and $I_d$ denotes a $d \times d$ identity matrix. The standard symplectic form defined in \eqref{eq:symplectic-form} can be written as
\[
\Omega(z,w) = \langle z, J w \rangle, 
\qquad \text{ for } z,w \in \R^{2d}.
\]
As already stated in the introduction, a lattice $\Gamma \subseteq \R^{2d}$ is called \emph{rational} if the numbers $\Omega(\gamma,\gamma')$ are rational for all $\gamma,\gamma' \in \Gamma$. 
We can always write a lattice as $\Gamma = M \Z^{2d}$ for some $M \in \mathrm{GL}(2d,\R)$, and we say that $M$ \emph{represents} $\Gamma$ in this case. If $M' \in \mathrm{GL}(2d,\R)$ also represents $\Gamma$, then there exists $R \in \mathrm{GL}(2d,\Z)$ such that $M' = M R$. 
For any $M \in \mathrm{GL}(2d,\R)$ the associated matrix $\theta = M^t J M$ is skew-symmetric, that is, $\theta^t = -\theta$ where $\theta^t$ denotes the transpose of $\theta$. 
If $M' = M R$ then $\theta' := (M')^t J M' = R^t \theta R$, i.e. $\theta$ and $\theta'$ are \emph{congruent}. 
Thus any lattice $\Gamma$ determines a real, skew-symmetric matrix $\theta$ up to congruence, where $\theta = M^t J M$ for some $M \in \mathrm{GL}(2d,\R)$ such that $\Gamma = M \Z^{2d}$. 
In particular $\Gamma$ is rational if and only if $\theta$ is rational, in other words, if and only if $\theta$ has rational entries.

Suppose that $\theta$ is a rational skew-symmetric $2d \times 2d$ matrix. Let $r$ be the least natural number such that $r \theta$ is integral. We will call $r$ the \emph{order} of $\theta$. We will represent $r\theta$ in its \emph{skew normal form} (see \cite[p.\ 57, Theorem IV.1]{Ne72} for an algorithm). For this, there are natural numbers $h_1, \ldots, h_{d}$ with $h_i \mid h_{i+1}$ for $1 \leq i \leq d-1$ and a matrix $R \in GL(\Z,2d)$ such that
\begin{equation}
\label{eq:block_form}
R^t (r \theta) R = \begin{pmatrix} 0 & B \\ -B & 0 \end{pmatrix},
\end{equation}
where
\[
B = \begin{pmatrix} h_1 & \cdots & 0 \\ \vdots & \ddots & \vdots \\ 0 & \cdots & h_d \end{pmatrix}.
\]
The numbers $h_1, \ldots, h_d$ are unique and are called the \emph{invariant factors} of~$r\theta$.

If $\theta$ and $\theta'$ are congruent rational skew-symmetric $2d \times 2d$ matrices, say $\theta' = R^t \theta R$ for some $R \in \mathrm{GL}(2d,\Z)$, then they have the same order. Indeed, $r\theta$ is rational if and only if $r\theta' = R^t (r\theta) R$ is rational. Furthermore, since the matrices $r\theta$ and $r\theta'$ are congruent, they share the same skew normal form, hence they have identical invariant factors as well. For this reason it makes sense to define the \emph{order} and \emph{invariant factors} of a rational lattice $\Gamma \subseteq \R^{2d}$ respectively as the order $r$ of $\theta$ and invariant factors of $r\theta$, where $\theta = M^t J M$ for any $M \in \mathrm{GL}(2d,\R)$ that represents~$\Gamma$.

%==========================================================================================
\begin{proposition}
\label{prop:rational-lattice}
Let $\Gamma \subseteq \R^{2d}$ be a lattice. 
Then $\Gamma$ is rational if and only if the subgroup $\subgroup{\Gamma}$ of $\Gamma$ defined in \eqref{eq:subgroup} has finite index. 
Furthermore, letting $r$ be the order of $\Gamma$ and $h_1, \ldots, h_d$ its invariant factors, we have that
\[ 
\Gamma / \subgroup{\Gamma} \cong ( \Z / r_1 \Z)^2 \times \cdots \times (\Z / r_d \Z)^2 
\]
where $r_i = r / \gcd(h_i,r)$ for $1 \leq i \leq d$. 
In particular $[ \Gamma : \subgroup{\Gamma} ] = r_1^2 \cdots r_d^2$, and thus
\[ 
n_\Gamma
= [ \Gamma : \subgroup{\Gamma} ]^{1/2}
= r_1 \cdots r_d
= \frac{r^d}{\gcd(h_1,r)\cdots\gcd(h_d,r)}.
\]
\end{proposition}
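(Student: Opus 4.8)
The plan is to work directly with the skew-normal form of $r\theta$ and to reduce everything to a single computation with the block-diagonal matrix $B$. First I would fix $M \in \mathrm{GL}(2d,\R)$ with $\Gamma = M\Z^{2d}$, set $\theta = M^t J M$, and observe that for $\gamma = Mx$ and $\gamma' = My$ with $x,y \in \Z^{2d}$ one has $\Omega(\gamma,\gamma') = \langle Mx, JMy\rangle = x^t \theta y$. Hence $\Omega(\gamma,\gamma') \in \Z$ for all $\gamma' \in \Gamma$ if and only if $x^t \theta y \in \Z$ for all $y \in \Z^{2d}$, i.e.\ if and only if $\theta^t x = -\theta x \in \Z^{2d}$, equivalently $\theta x \in \Z^{2d}$. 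Thus under the isomorphism $\Gamma \cong \Z^{2d}$ induced by $M$, the subgroup $\subgroup{\Gamma}$ corresponds to $\Lambda_\theta := \{ x \in \Z^{2d} : \theta x \in \Z^{2d}\}$, and this is independent of the representative $M$ (which matches the congruence-invariance already discussed in the text). Since $\theta = \frac1r(r\theta)$ has rational entries iff $\Gamma$ is rational, and $\Lambda_\theta = \Z^{2d}$ when $\theta$ is not rational (no single $x$ clears all denominators simultaneously unless they are bounded), the finite-index claim will follow once we compute $[\Z^{2d} : \Lambda_\theta]$ in the rational case.

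Next I would use the skew-normal form. Pick $R \in \mathrm{GL}(2d,\Z)$ with $R^t(r\theta)R = \begin{pmatrix} 0 & B \\ -B & 0 \end{pmatrix} =: r\theta_0$, so $\theta_0 = R^{-t}\theta R^{-1}$ is congruent to $\theta$; I would check that $x \mapsto R^t x$ carries $\Lambda_{\theta_0}$ bijectively onto $\Lambda_\theta$ (using $\theta = R^{-t}\theta_0 R^{-1}$ and that $R,R^t$ preserve $\Z^{2d}$), so the index is the same for $\theta_0$. For $\theta_0 = \frac1r\begin{pmatrix} 0 & B \\ -B & 0\end{pmatrix}$, the condition $\theta_0 x \in \Z^{2d}$ for $x = (u,v)$ decouples into $\tfrac1r Bv \in \Z^d$ and $\tfrac1r Bu \in \Z^d$. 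Since $B = \operatorname{diag}(h_1,\dots,h_d)$, the condition $\tfrac{h_i}{r} v_i \in \Z$ is exactly $v_i \in \tfrac{r}{\gcd(h_i,r)}\Z = r_i\Z$. Therefore $\Lambda_{\theta_0} = (r_1\Z \times \cdots \times r_d\Z)^2$ inside $\Z^{2d} = \Z^d \times \Z^d$, and consequently
\[
\Gamma/\subgroup{\Gamma} \;\cong\; \Z^{2d}/\Lambda_{\theta_0} \;\cong\; \big(\Z/r_1\Z \times \cdots \times \Z/r_d\Z\big)^2,
\]
which is exactly the asserted decomposition $(\Z/r_1\Z)^2 \times \cdots \times (\Z/r_d\Z)^2$ after rearranging factors. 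The index is then $\prod_i r_i^2$, the formula for $n_\Gamma$ follows by taking square roots and substituting $r_i = r/\gcd(h_i,r)$, and in particular this index is finite, giving one direction of the equivalence; conversely if $\Gamma$ is non-rational then $\Lambda_\theta = \Z^{2d}$ has infinite... no: rather, one shows $\subgroup{\Gamma}$ has infinite index because the image of $\Omega(\cdot,\gamma')$ would be forced to land in a fixed fractional-ideal for a finite-index subgroup, contradicting irrationality of some $\Omega(\gamma,\gamma')$; I would phrase this as: if $[\Gamma:\subgroup{\Gamma}] = N < \infty$ then $N\gamma \in \subgroup{\Gamma}$ for all $\gamma$, so $N\,\Omega(\gamma,\gamma') \in \Z$ for all $\gamma,\gamma'$, forcing $\Gamma$ rational.

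I expect the main obstacle to be purely bookkeeping rather than conceptual: making sure the congruence $\theta \leftrightarrow \theta_0$ really does transport the lattices $\Lambda_\theta$ and $\Lambda_{\theta_0}$ correctly (the transpose-inverse placement of $R$ matters, and one must confirm both $R^t\Z^{2d} = \Z^{2d}$ and $R\Z^{2d} = \Z^{2d}$, which holds since $R \in \mathrm{GL}(2d,\Z)$), and correctly identifying the decoupling of the condition $\theta_0 x \in \Z^{2d}$ into the two $B$-blocks without sign or index errors. A secondary minor point is the elementary number-theory fact that $\tfrac{h}{r}v \in \Z \iff v \in \tfrac{r}{\gcd(h,r)}\Z$, which follows from writing $h = \gcd(h,r) h'$, $r = \gcd(h,r) r'$ with $\gcd(h',r') = 1$. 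Everything else — the translation from $\Omega$ to the bilinear form $x^t\theta y$, and the conversion of an index of a product of cyclic-quotient lattices into $\prod r_i^2$ — is routine.
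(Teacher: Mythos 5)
Your proposal is correct and takes essentially the same route as the paper: transport $\subgroup{\Gamma}$ to a sublattice of $\Z^{2d}$ via the representing matrix $M$, reduce by congruence to the skew normal form of $r\theta$, read off that the sublattice is $(r_1\Z\times\cdots\times r_d\Z)^2$, and obtain the converse from the observation that finite index $N$ forces $N\,\Omega(\gamma,\gamma')\in\Z$. The only blemishes are minor and harmless: from $R^t(r\theta)R=r\theta_0$ one has $\theta_0=R^t\theta R$ and hence $\Lambda_\theta=R\,\Lambda_{\theta_0}$, so the transporting map is $x\mapsto Rx$ rather than $x\mapsto R^t x$ (the index and quotient are unaffected), and the stray parenthetical claiming $\Lambda_\theta=\Z^{2d}$ for non-rational $\theta$ is false but unused once you switch to the Lagrange-type argument for that direction.
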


\begin{proof}
First, note that by picking a basis $e_1, \ldots, e_{2d}$ for $\Gamma$, the numbers $\Omega(\gamma,\gamma')$ for $\gamma,\gamma' \in \Gamma$ can all be expressed as integer linear combinations of $\Omega(e_i,e_j)$ for $1 \leq i,j \leq 2d$. Hence $\Gamma$ is rational if and only if there exists $n \in \N$ such that $\Omega(\gamma,\gamma') \in n^{-1}\Z$ for all $\gamma,\gamma' \in \Gamma$. Equivalently $n \gamma \in \subgroup{\Gamma}$ for all $\gamma \in \Gamma$, which means that every element of $\Gamma / \subgroup{\Gamma}$ has order at most $n$. Since $\Gamma / \subgroup{\Gamma}$ is a finitely generated abelian group, this happens exactly when it has finite order, that is, when $[\Gamma : \subgroup{\Gamma} ] < \infty$.

Suppose now that $\Gamma$ is rational. Let $M \in \mathrm{GL}(2d,\R)$ be a matrix that represents~$\Gamma$ and let $\theta = M^t J M$. Since the order and invariant factors are invariant under congruence, we may assume that $r\theta$ is already of the block form in \eqref{eq:block_form}. 
If we identify $\Gamma$ with $\Z^{2d}$ via $\gamma \mapsto M^{-1}\gamma$, then $\Gamma_{\Omega}$ is identified with the subgroup
\[ 
H = \big\{ k \in \Z^{2d} : \Omega(Mk,Ml) \in \Z \text{ for all } l \in \Z^{2d} \big\}.
\]
Note that $k = (k_1, \ldots, k_{2d}) \in H$ if and only if
\[ 
\sum_{i=1}^{2d} k_i \theta_{i,j} = \sum_{i=1}^d (k_i - k_{i+d}) \frac{h_i}{r} \in \Z, 
\qquad \text{ for } 1 \leq j \leq 2d. 
\]
Hence we see that a basis for $H$ is given by the vectors
\[ 
r_1 \cdot e_1, \ldots, r_d \cdot e_d, r_1 \cdot e_{d+1}, \ldots, r_d \cdot e_{2d}, 
\]
where $e_i$ denotes the $i$th standard basis vector of $\R^{2d}$. It follows that $\Z^{2d}/H \cong (\Z / r_1 \Z)^2 \times \cdots \times (\Z / r_d \Z)^2$, which finishes the proof.
\end{proof}

%==========================================================================================
%==========================================================================================
\section{Comparison of projections}

%==========================================================================================
In this section we introduce the relevant notions and results we need from the comparison theory of projections and positive elements in \ca{s}.
These will later be applied to noncommutative tori.

Let $A$ be a \ca{} and let $p$ and $q$ be projections in $A$. 
We say that $p$ is \emph{(Murray-von Neumann) subequivalent} to $q$, written $p \precsim q$, if there exists an element $v \in A$ such that $v^*v = p$ and $vv^* \leq q$. This notion can be extended to matrices with values in $A$ in the following way: Denote by $\mathrm{M}_n(A)$ the set of $n \times n$ matrices with values in $A$ and set
\[ 
\mathrm{M}_\infty(A) = \bigcup_{n=1}^\infty \mathrm{M}_n(A),
\]
where $\mathrm{M}_n(A)$ is identified with the top left corner of $\mathrm{M}_{n+1}(A)$. 
We define subequivalence of two projections $p,q \in \mathrm{M}_\infty(A)$ by viewing them as elements in $\mathrm{M}_n(A)$ for $n$ big enough and applying the definition in this \ca{} (this is independent of the chosen $n$).

For projections $p \in \mathrm{M}_m(A)$ and $q \in \mathrm{M}_n(A)$ we set
\[ 
p \oplus q = \begin{pmatrix} p & 0 \\ 0 & q \end{pmatrix} \in \mathrm{M}_{m+n}(A). 
\]
Thus, for $p,q \in \mathrm{M}_\infty(A)$, we can define $p \oplus q \in \mathrm{M}_\infty(A)$. 
For $n \in \N$ we also set $p^{\oplus n} = p \oplus \cdots \oplus p$ where the number of summands equals $n$.

A \emph{tracial state} on a \ca{} $A$ is a positive linear functional $\tau \colon A \to \C$ such that $\| \tau \| = 1$ and $\tau(ab) = \tau(ba)$ for all $a,b \in A$.
By abuse of notation we denote also by $\tau$ the function on $\mathrm{M}_\infty(A)$ given by $a \mapsto \sum_{i=1}^n \tau(a_{i,i})$ for $a = (a_{i,j})_{i,j=1}^n \in \mathrm{M}_n(A)$.

One of the central tools of the present paper is the \emph{radius of comparison} of a \ca{} $A$, denoted by $\rc(A)$, which is a number in $[0,\infty]$; 
see \cite[Definition~6.1]{Tom06FlatDimGrowth} for (residually) stably finite \ca{s}, and \cite{BlaRobTikTomWin12AlgRC} for an amended definition in the general setting.
It encodes the necessary size of the gap between the ranks of two positive elements in some matrix algebra over $A$ to ensure their subequivalence in the sense of Cuntz.
Since the precise definition is not relevant here, we only state a specific application to the comparison of projections.

%==========================================================================================
\begin{lemma}
\label{lem:radius}
Let $A$ be a unital, exact \ca{}, and let $p,q$ be projections in $\mathrm{M}_\infty(A)$.
Suppose that
\[
\tau(p)  < \tau(q) - \rc(A)
\]
for every tracial state $\tau$ on $A$. 
Then $p \precsim q$.
\end{lemma}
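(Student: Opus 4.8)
The plan is to deduce the claim from the defining property of the radius of comparison, which is stated in terms of Cuntz comparison of positive elements, and then to upgrade Cuntz subequivalence to Murray--von Neumann subequivalence using that $p$ and $q$ are projections.

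First I would record the elementary identity $d_\tau(e) = \tau(e)$ for every projection $e \in \mathrm{M}_\infty(A)$ and every lower semicontinuous $2$-quasitrace $\tau$ on $A$, where $d_\tau$ denotes the associated rank function; this holds because $d_\tau(e) = \lim_n \tau(e^{1/n}) = \tau(e)$, as $e^{1/n} = e$. Exactness of $A$ enters here: by Haagerup's theorem every lower semicontinuous, normalized $2$-quasitrace on an exact \ca{} is a trace, and on the unital algebra $A$ these are exactly the tracial states. Consequently the hypothesis ``$\tau(p) < \tau(q) - \rc(A)$ for every tracial state $\tau$'' is equivalent to ``$d_\tau(p) + \rc(A) < d_\tau(q)$ for every $2$-quasitrace $\tau$ on $A$''.

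Next I would convert this strict inequality into a uniform gap. Since $A$ is unital, the tracial state space $T(A)$ is weak-$*$ compact, and $\tau \mapsto \tau(q) - \tau(p)$ is weak-$*$ continuous (with $\tau$ extended to matrix algebras as in the text), so $\delta := \inf_{\tau \in T(A)} \bigl( \tau(q) - \tau(p) \bigr)$ is attained and satisfies $\delta > \rc(A)$. (If $T(A) = \emptyset$ the statement is vacuous or immediate from the conventions in the definition of $\rc$, so we may assume $T(A) \neq \emptyset$.) Choosing $r$ with $\rc(A) < r < \delta$ yields $d_\tau(p) + r < d_\tau(q)$ for every $2$-quasitrace $\tau$, and hence, by the very definition of the radius of comparison, $p$ is Cuntz-subequivalent to $q$; that is, there is a sequence $(v_n)$ in $\mathrm{M}_\infty(A)$ with $v_n q v_n^* \to p$.

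Finally I would invoke the standard fact that Cuntz subequivalence of projections implies Murray--von Neumann subequivalence. Fixing $n$ with $\| v_n q v_n^* - p \| < 1$ and writing $x = q^{1/2} v_n^* p$, one checks that $x^* x = p\, v_n q v_n^*\, p$ is invertible in the corner $pAp$, so that $w := x (x^*x)^{-1/2}$ satisfies $w^*w = p$ and $ww^* \leq q$ (the latter because $ww^*$ is a projection lying in $qAq$, hence a subprojection of $q$). Thus $p \precsim q$. I expect the only genuinely delicate point to be the bookkeeping in the first two steps --- reconciling the $2$-quasitraces appearing in the definition of $\rc(A)$ with tracial states via exactness, and extracting a strict gap by compactness --- whereas the final upgrade from Cuntz to Murray--von Neumann comparison for projections is entirely routine.
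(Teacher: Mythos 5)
Your proposal is correct and follows essentially the same route as the paper's own (very compressed) proof: identify $d_\tau$ with $\tau$ on projections, use exactness (Haagerup) to identify quasitraces with tracial states, invoke the definition of the radius of comparison to get Cuntz subequivalence, and use that Cuntz and Murray--von Neumann subequivalence agree on projections. Your extra step of extracting a uniform gap via weak-$*$ compactness of $T(A)$ is a point the paper leaves implicit, and you handle it correctly.
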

\begin{proof}
This follows from the definition in \cite[Section~3.1]{BlaRobTikTomWin12AlgRC}, using that for exact \ca{s} all quasitraces $\tau$ are traces, that for a projection $r$ we have $d_\tau([r])=\tau(r)$, and that Murray-von Neumann subequivalence and Cuntz subequivalence agree on projections.
\end{proof}

%==========================================================================================
A \ca{} is said to be \emph{$n$-homogeneous} if each of its irreducible representations is $n$-dimensional.
To simplify the discussion, we restrict to the unital case.
Here, the typical example is $M_n(C(X))$ for some compact, Hausdorff space $X$, which we can view as the algebra of continuous sections of the trivial bundle over~$X$ with fibers $M_n(\C$).
More generally, given a locally trivial $M_n(\C)$-bundle over a compact, Hausdorff space $X$, the algebra of continuous sections is a unital, $n$-homogeneous \ca{} with primitive ideal space naturally homeomorphic to $X$. All unital $n$-homogeneous \ca{s} arise this way;
see for example \cite[Theorem~IV.1.7.23]{Bla06OpAlgs}.

We will need the following estimate of the radius of comparison of $n$-homogeneous \ca{s}, which is a consequence of \cite[Theorem~4.6]{Tom09CompThe}.

%==========================================================================================
\begin{proposition}[Toms]
\label{prop:radius-homogeneous}
Let $A$ be a unital $n$-homogeneous \ca{} with primitive ideal space $X$. 
Then
\[ 
\rc(A) \leq \frac{\dim(X)-2}{2n}. 
\]
\end{proposition}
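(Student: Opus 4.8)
The plan is to deduce the estimate from Toms's \cite[Theorem~4.6]{Tom09CompThe}, which bounds the radius of comparison of a \ca{} of this type by the covering dimension of its primitive ideal space; below I describe the structure of the argument and flag the step that carries the real weight. If $\dim(X)=\infty$ the estimate is vacuous, so assume $\dim(X)<\infty$.

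First I would pass to bundle geometry. As recalled above, a unital $n$-homogeneous \ca{} $A$ is the algebra of continuous sections of a locally trivial $M_n(\C)$-bundle $E$ over $X=\mathrm{Prim}(A)$. Since the only tracial state on each fibre $M_n(\C)$ is the normalized trace $\tfrac1n\mathrm{Tr}$, every tracial state on $A$ has the form $\tau_\mu(a)=\int_X\tfrac1n\mathrm{Tr}(a(x))\,d\mu(x)$ for a Borel probability measure $\mu$ on $X$; and for a positive element $a$ in $M_k(A)$, which is the section algebra of the $M_{kn}(\C)$-bundle $M_k(E)$, the associated dimension function is $d_{\tau_\mu}(a)=\int_X\tfrac1n\,\mathrm{rank}(a(x))\,d\mu(x)$, with $x\mapsto\mathrm{rank}(a(x))$ lower semicontinuous and bounded by $kn$. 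By the definition of the radius of comparison it then suffices to prove: if $a,b\in M_k(A)_+$ satisfy $d_{\tau_\mu}(a)+\tfrac{\dim(X)-2}{2n}<d_{\tau_\mu}(b)$ for every $\mu$, then $a\precsim b$ in the Cuntz sense. Testing the hypothesis against point masses already yields the fibrewise gap $\mathrm{rank}(b(x))-\mathrm{rank}(a(x))\ge\lfloor\dim(X)/2\rfloor$ for all $x\in X$.

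The geometric input is the classical vector-bundle embedding lemma: if $V,W$ are complex vector bundles over $X$ with $\mathrm{rank}(W_x)-\mathrm{rank}(V_x)\ge\lfloor\dim(X)/2\rfloor$ for every $x$, then $V$ is isomorphic to a subbundle of $W$. Indeed, the set of non-injective linear maps $\C^a\to\C^b$ has real codimension $2(b-a+1)$ in $\mathrm{Hom}(\C^a,\C^b)$, so its complement is $2(b-a)$-connected, and a standard obstruction-theoretic argument then produces a global section of the bundle of fibrewise-injective homomorphisms $V\to W$ as soon as $\dim(X)\le 2(b-a)+1$ — which is exactly what the rank gap guarantees. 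For projections $p$ and $q$, applying this to their range bundles gives $p\precsim q$ immediately.

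The main obstacle is that $a$ and $b$ are general positive elements, so their rank functions are merely lower semicontinuous and the embedding lemma does not apply to them directly. The standard remedy is to fix $\varepsilon>0$, replace $a$ by $(a-\varepsilon)_+$, approximate this (up to Cuntz equivalence) by a positive element whose support is closed and whose rank function is locally constant along a finite filtration of $X$ by closed subsets, apply the embedding lemma stratum by stratum, reassemble the pieces, and finally let $\varepsilon\to 0$. Executing this bookkeeping — in particular controlling the small losses incurred when replacing lower-semicontinuous rank data by honest subbundles, which is what pins down the precise constant — is exactly the content of \cite[Theorem~4.6]{Tom09CompThe}. The only further point needed for the statement as phrased here is to pass from the trivial-bundle case $M_n(C(X))$ to an arbitrary locally trivial $M_n(\C)$-bundle (harmless, since the estimate only uses the fibrewise rank data, or one may just quote Toms's result directly) and to renormalize the trace by the factor $n$ accounting for the amplification from $C(X)$ to $M_n(C(X))$.
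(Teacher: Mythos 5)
Your proposal is correct and takes essentially the same route as the paper: the paper states this proposition without proof as a direct consequence of \cite[Theorem~4.6]{Tom09CompThe}, which is exactly the result you ultimately invoke for the technical core (your bundle-theoretic sketch of traces, rank functions, and the obstruction-theoretic embedding lemma is accurate, but the paper leaves all of that inside Toms's theorem). No further comparison is needed.
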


%==========================================================================================
%==========================================================================================
\section{Noncommutative tori}

%==========================================================================================
Let $\theta$ be a real, skew-symmetric $2d \times 2d$ matrix, that is, $\theta^t = -\theta$ where $\theta^t$ denotes the transpose of $\theta$. 
The \emph{noncommutative torus} $A_{\theta}$ associated to $\theta$ is the universal \ca{} generated by $2d$ unitary elements $u_1, \ldots, u_{2d}$ satisfying
\begin{equation}
\label{eq:nc-torus-relations}
u_j u_i = e^{2\pi i \theta_{i,j}} u_i u_j, 
\qquad \text{ for } 1 \leq i,j \leq 2d.
\end{equation}
As in \Cref{sec:rational_lattices}, two real skew-symmetric matrices $\theta$ and $\theta'$ are congruent if there exists an $R \in \mathrm{GL}(2d,\Z)$ such that $\theta' = R^t \theta R$. In this case the associated noncommutative tori $A_\theta$ and $A_{\theta'}$ are isomorphic \cite[p.\ 293]{RiSc99}.

The noncommutative torus $A_\theta$ is equipped with a canonical tracial state $\tau$ which is determined by
\[ 
\tau( u_1^{k_1} \cdots u_{2d}^{k_{2d}}) = \delta_{k_1,0} \cdots \delta_{k_{2d},0} , \quad k_1, \ldots, k_{2d} \in \Z. 
\]
For a proof of the following proposition see \cite[Lemma~2.3]{el84}.

\begin{proposition}
\label{prop:traces-on-torus}
Let $p$ be a projection in $\mathrm{M}_\infty(A_\theta)$ and let $\tau'$ be any tracial state on $A_\theta$. Then $\tau'(p) = \tau(p)$ where $\tau$ denotes the canonical tracial state.
\end{proposition}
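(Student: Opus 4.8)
The plan is to exploit the canonical gauge action of $\T^{2d}$ on $A_\theta$ together with the fact that a norm-continuous path of projections consists of mutually Murray--von Neumann equivalent elements, on which every tracial state takes the same value.

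First I would set up the action. By the universal property of $A_\theta$, for each $z = (z_1, \ldots, z_{2d}) \in \T^{2d}$ the unitaries $z_1 u_1, \ldots, z_{2d} u_{2d}$ satisfy the same relations \eqref{eq:nc-torus-relations}, so there is a unique automorphism $\alpha_z$ of $A_\theta$ with $\alpha_z(u_j) = z_j u_j$; these assemble into a strongly continuous action $\alpha$ of $\T^{2d}$ on $A_\theta$ (strong continuity is checked on the dense $*$-subalgebra of polynomials in the $u_j$ and then follows from $\| \alpha_z \| = 1$). Extending each $\alpha_z$ entrywise gives an automorphism of every $\mathrm{M}_n(A_\theta)$, still denoted $\alpha_z$.

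The key observation is that for every tracial state $\tau'$ on $A_\theta$ (and its entrywise extension to $\mathrm{M}_\infty(A_\theta)$) and every projection $p \in \mathrm{M}_\infty(A_\theta)$ one has $\tau'(\alpha_z(p)) = \tau'(p)$ for all $z \in \T^{2d}$. Indeed, choosing a path in $\T^{2d}$ from $1$ to $z$ produces a norm-continuous path of projections in some fixed $\mathrm{M}_n(A_\theta)$ from $p$ to $\alpha_z(p)$; by compactness one can partition the parameter interval so that consecutive projections along the path are at distance $< 1$, and the standard fact that $\| p' - q' \| < 1$ implies $p' \precsim q' \precsim p'$ chains up to $\alpha_z(p) \precsim p \precsim \alpha_z(p)$. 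Since a tracial state takes equal values on Murray--von Neumann equivalent projections, the claim follows. Next I would recover the canonical trace by averaging: for any tracial state $\tau'$, the map $z \mapsto \tau'(\alpha_z(a))$ is continuous, so $\bar\tau(a) := \int_{\T^{2d}} \tau'(\alpha_z(a)) \, dz$ (normalized Haar measure) is a tracial state, and on a monomial $\alpha_z(u_1^{k_1} \cdots u_{2d}^{k_{2d}}) = z_1^{k_1} \cdots z_{2d}^{k_{2d}} u_1^{k_1} \cdots u_{2d}^{k_{2d}}$, so integration gives $\bar\tau(u_1^{k_1} \cdots u_{2d}^{k_{2d}}) = \delta_{k_1,0} \cdots \delta_{k_{2d},0}$. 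As $\bar\tau$ and the canonical trace $\tau$ are bounded functionals agreeing on the dense span of monomials, $\bar\tau = \tau$, and the same identity passes to $\mathrm{M}_\infty(A_\theta)$ since both are extended entrywise. Combining the two steps, for a projection $p \in \mathrm{M}_\infty(A_\theta)$,
\[
\tau(p) = \bar\tau(p) = \int_{\T^{2d}} \tau'(\alpha_z(p)) \, dz = \int_{\T^{2d}} \tau'(p)\, dz = \tau'(p).
\]

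There is no deep difficulty here; the only points that require genuine care are the homotopy-invariance of the trace on projections — the compactness-plus-perturbation argument showing that projections joined by a norm-continuous path are Murray--von Neumann equivalent — and verifying strong continuity of the gauge action, which is what makes the averaged functional $\bar\tau$ well defined. Everything else is routine bookkeeping with the universal property of $A_\theta$ and Haar integration over $\T^{2d}$.
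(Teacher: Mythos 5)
Your proof is correct. The paper does not argue this statement itself but simply cites \cite[Lemma~2.3]{el84}; your route---gauge-invariance of any tracial state on projections via a norm-continuous path of projections (distance $<1$ giving Murray--von Neumann (sub)equivalence, hence equal trace values), followed by averaging over the gauge action of $\T^{2d}$ to identify the averaged functional with the canonical trace $\tau$---is essentially the standard argument behind that citation, so in substance it is the same approach.
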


We call a noncommutative torus $A_\theta$ \emph{rational} if the associated skew-symmetric matrix $\theta$ only has rational entries. 
It is known that rational noncommutative tori are homogeneous \ca{s} with spectrum homeomorphic to the torus $\T^{2d}$.
% If the spectrum of a homogeneous \ca{} $A$ is connected, then the rank of a projection~$p$ is constant for all irreducible representations, and it follows that $\tau_1(p)=\tau_2(p)$ for all tracial states $\tau_1$ and $\tau_2$ on $A$.
The dimension of the irreducible representations of a rational noncommutative torus is computed in \cite{vava86}, but we include the computation here for completeness.

%==========================================================================================
\begin{proposition}
\label{prop:rational-torus-homogeneous}
Let $\theta$ be a rational skew-symmetric nondegenerate $2d \times 2d$ matrix of order $r \in \N$ and let $h_1, \ldots, h_d$ be the invariant factors of $r \theta$. 
Set
\begin{equation}
\label{eq:homogeneous-dimension}
n = \frac{r^d}{\gcd(h_1,r) \cdots \gcd(h_d,r)}.
\end{equation}
Then $A_\theta$ is $n$-homogeneous with spectrum homeomorphic to $\T^{2d}$.
\end{proposition}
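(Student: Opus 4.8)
The plan is to put $r\theta$ in normal form and then recognise $A_\theta$ as a tensor product of rational rotation algebras whose structure is classical. Since congruent skew-symmetric matrices yield isomorphic noncommutative tori \cite[p.~293]{RiSc99}, and since the order, the invariant factors, and the claimed conclusion all depend only on the congruence class of $\theta$, I may assume that $r\theta$ is in the skew normal form \eqref{eq:block_form}, so that the only nonzero entries of $\theta$ are $\theta_{i,d+i} = h_i/r = -\theta_{d+i,i}$ for $1 \leq i \leq d$. Writing $v_i = u_i$ and $w_i = u_{d+i}$, the relations \eqref{eq:nc-torus-relations} then say precisely that $w_i v_i = e^{2\pi i h_i/r} v_i w_i$ for each $i$, while all generators carrying distinct indices commute. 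From this I would deduce, using the universal properties of $A_\theta$ and of the rotation algebras together with the nuclearity of the latter (so that the maximal and minimal tensor norms coincide), that
\[
A_\theta \cong A_{h_1/r} \otimes A_{h_2/r} \otimes \cdots \otimes A_{h_d/r},
\]
where $A_\alpha$ denotes the rotation algebra generated by unitaries $u,v$ with $vu = e^{2\pi i\alpha}uv$.

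Next I would invoke the classical structure of a rational rotation algebra. Write $h_i/r = p_i/q_i$ in lowest terms, so that $q_i = r/\gcd(h_i,r)$ is a positive integer (positive because $\theta$ is nondegenerate, hence each $h_i$ is nonzero). Then $A_{p_i/q_i}$ is the section algebra of a locally trivial $M_{q_i}(\C)$-bundle over $\T^2$; in particular it is a $q_i$-homogeneous \ca{} with spectrum homeomorphic to $\T^2$. (Concretely: since $\gcd(p_i,q_i)=1$, $\zeta_i := e^{2\pi i p_i/q_i}$ is a primitive $q_i$-th root of unity, and the representations on $\C^{q_i}$ sending $u$ to $\mu$ times the diagonal matrix $\mathrm{diag}(1,\zeta_i,\ldots,\zeta_i^{q_i-1})$ and $v$ to $\nu$ times the cyclic shift, parametrised by $(\mu,\nu) \in \T^2$, are irreducible and give a bijection onto the spectrum; see also \cite{vava86}.)

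Finally I would combine the factors: a tensor product of section algebras of $M_{q_i}(\C)$-bundles over spaces $X_i$ is the section algebra of the external tensor product bundle, which is an $M_{q_1\cdots q_d}(\C)$-bundle over $X_1 \times \cdots \times X_d$. Applying this with $X_i = \T^2$ shows that $A_\theta$ is an $n$-homogeneous \ca{} with spectrum homeomorphic to $\T^{2d}$, where
\[
n = q_1 \cdots q_d = \prod_{i=1}^d \frac{r}{\gcd(h_i,r)} = \frac{r^d}{\gcd(h_1,r) \cdots \gcd(h_d,r)},
\]
which is precisely \eqref{eq:homogeneous-dimension}. The step I expect to require the most care is the identification $A_\theta \cong \bigotimes_{i=1}^d A_{h_i/r}$ as universal \ca{s}: one checks that the commuting $*$-homomorphisms $A_{h_i/r} \to A_\theta$ furnished by the universal property of the rotation algebra assemble into a $*$-homomorphism from the maximal tensor product onto $A_\theta$, that the generators of $A_\theta$ conversely determine a $*$-homomorphism in the reverse direction, and that these two maps are mutually inverse on generators (using nuclearity to identify the maximal with the minimal tensor product). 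The remaining ingredients---the bundle description of rational rotation algebras and the multiplicativity of homogeneity and spectra under tensor products---are standard.
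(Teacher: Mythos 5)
Your argument is essentially the paper's proof: reduce to the skew normal form via congruence, identify $A_\theta \cong A_{h_1/r}\otimes\cdots\otimes A_{h_d/r}$, use that each rational rotation algebra $A_{p_i/q_i}$ is $q_i$-homogeneous with spectrum $\T^2$, and multiply; the only (harmless) difference is that you glue the factors via external tensor products of $M_{q_i}(\C)$-bundles, while the paper instead uses that every irreducible representation of a tensor product with a type~I factor is a tensor product of irreducibles. One small slip in your parenthetical: the family $(\mu,\nu)\mapsto\pi_{\mu,\nu}$ is not a bijection from $\T^2$ onto the spectrum but a $q_i^2$-to-one map (conjugating by the shift and by the diagonal unitary gives $\pi_{\mu,\nu}\cong\pi_{\zeta_i\mu,\nu}\cong\pi_{\mu,\zeta_i\nu}$), though the quotient $\T^2/(\Z_{q_i}\times\Z_{q_i})$ is again homeomorphic to $\T^2$, so the stated conclusion is unaffected.
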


\begin{proof}
According to the skew normal form, we can find $R \in \mathrm{Gl}(2d,\Z)$ such that \eqref{eq:block_form} holds, with corresponding diagonal matrix $B$ having the invariant factors on the diagonal. 
It follows that
\[
R^t\theta R = \begin{pmatrix} 0 & r^{-1}B \\ -r^{-1}B & 0 \end{pmatrix}. 
\]
Hence $A_{\theta} \cong A_{R^t \theta R}$ is universally generated by unitaries $u_1, \ldots, u_{2d}$ where $u_{j+d} u_j = e^{2\pi i h_j/r} u_j u_{j+d}$ for $1 \leq j \leq d$ and where all the other unitaries commute. 
In other words, $A_{\theta}$ is isomorphic to the tensor product
\[ 
A_{\theta} \cong A_{h_1/r} \otimes \cdots \otimes A_{h_{d}/r} 
\]
of~$d$ noncommutative $2$-tori.

It is well-known that the noncommutative $2$-torus $A_{p/q}$ where $p,q \in \Z$, $q > 0$ and $\gcd(p,q) = 1$ is $q$-homogeneous. 
An explicit irreducible $q$-dimensional representation $\pi_{p,q}$ is given by $u \mapsto U$ and $v \mapsto V$, where the $q \times q$ matrices $U$ and $V$ are given by
\[ 
U = \begin{pmatrix} 0 & 1 & 0 & \cdots & 0 \\ 0 & 0 & 1 & \cdots & 0 \\ \vdots & \vdots  & \ddots & \ddots & \vdots \\ 0 & 0 & 0& \ddots & 1 \\ 1 & 0 & 0 & \cdots & 0 \end{pmatrix}, \qquad V = \begin{pmatrix} 1 & 0 & \cdots & 0 \\ 0 & e^{2\pi i p/q} & \cdots & 0 \\ \vdots & \vdots & \ddots & \vdots \\ 0 & 0 & \cdots & e^{2\pi i p (q-1)/q} \end{pmatrix}. 
\]
We form the numbers
\[ 
p_i = \frac{h_i}{\gcd(h_i,r)}, \;\;\; q_i = \frac{r}{\gcd(h_i,r)}, \quad 1 \leq i \leq s, 
\]
so that $\gcd(p_i,q_i) = 1$. 
The \ca{} $A_{h_i/r} = A_{p_i/q_i}$ is then $q_i$-homogeneous with spectrum $\T^2$.

In general, if~$A$ is a unital, $n$-homogeneous \ca{} with spectrum $X$ and~$B$ is unital, $k$-homogeneous with spectrum $Y$, then $A \otimes B$ is $nk$-homogeneous with spectrum $X \times Y$.
Indeed, if $\pi_A$ and $\pi_B$ are irreducible representations of $A$ and~$B$, respectively, then $\pi_A\otimes\pi_B$ is an irreducible representation of $A \otimes B$, and by \cite[IV.3.4.25]{Bla06OpAlgs} every irreducible representation of $A \otimes B$ arises this way up to unitary equivalence. 
(This works as soon as one of the \ca{s} is type~I, which includes homogeneous \ca{s}.)

Using that $A_{\theta} \cong A_{h_1/r} \otimes \cdots \otimes A_{h_{d}/r}$, we obtain that $A_\theta$ is $q_1 \cdots q_d$-homogeneous with spectrum homeomorphic to $\T^{2d} = \T^2 \times \cdots \times \T^2$, and the result follows since
\[ 
n = q_1 \cdots q_d = \frac{r^d}{\gcd(h_1,r) \cdots \gcd(h_d,r)}. \qedhere
\]
\end{proof}

Using \Cref{prop:rational-torus-homogeneous} we can now infer the following about the radius of comparison of noncommutative tori.

%==========================================================================================
\begin{proposition}
\label{prop:radius-nc-tori}
Let $\theta$ be a real, skew-symmetric $2d \times 2d$ matrix.
\begin{enumerate}
\item 
If $\theta$ is non-rational, then $A_\theta$ has strict comparison, that is, $\rc(A_\theta) = 0$.
\item 
If $\theta$ is rational, then
\[ 
\rc(A_\theta) \leq \frac{d-1}{n} 
\]
where $n$ is as in \Cref{prop:rational-torus-homogeneous}.
\end{enumerate}
\end{proposition}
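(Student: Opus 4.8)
The plan is to handle the two cases separately. Part~(2) follows quickly by combining the results collected above, whereas part~(1) rests on the known structure of non-rational noncommutative tori.

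For part~(2), suppose first that $\theta$ is nondegenerate. Then \Cref{prop:rational-torus-homogeneous} applies and shows that $A_\theta$ is $n$-homogeneous with primitive ideal space homeomorphic to $\T^{2d}$. Since $\dim(\T^{2d}) = 2d$, \Cref{prop:radius-homogeneous} (Toms' estimate) gives
\[
\rc(A_\theta) \;\leq\; \frac{\dim(\T^{2d}) - 2}{2n} \;=\; \frac{2d - 2}{2n} \;=\; \frac{d-1}{n},
\]
which is exactly the claimed bound. To cover a degenerate rational $\theta$, I would observe that writing $r\theta$ in skew normal form while allowing some invariant factors $h_i$ to vanish still yields $A_\theta \cong A_{h_1/r} \otimes \cdots \otimes A_{h_d/r}$ with $A_0 \cong C(\T^2)$, so the proof of \Cref{prop:rational-torus-homogeneous} goes through without change and exhibits $A_\theta$ as $n$-homogeneous with spectrum $\T^{2d}$, with $n = \prod_i r/\gcd(h_i,r)$ under the convention $\gcd(0,r) = r$; \Cref{prop:radius-homogeneous} then applies verbatim.

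For part~(1), it suffices to prove that a non-rational $A_\theta$ is $\mathcal{Z}$-stable. Indeed, $A_\theta$ is nuclear (hence exact) and carries a faithful trace, hence is stably finite, and for such a \ca{} $\mathcal{Z}$-stability implies strict comparison of positive elements by R{\o}rdam's theorem, i.e.\ $\rc(A_\theta) = 0$. The $\mathcal{Z}$-stability can be quoted from the literature---it underlies the comparison results of Rieffel \cite{ri88} used in \cite[Theorem~5.4]{jalu20}---but if a self-contained argument is preferred, I would proceed as follows. The center of $A_\theta$ is generated by the unitaries $u^x$ with $x$ in the radical $R = \{x \in \Z^{2d} : \theta x \in \Z^{2d}\}$ of the bicharacter associated to $\theta$; since $\theta$ is non-rational, $R$ has rank $m < 2d$, so $Z(A_\theta) \cong C(\T^m)$ and $A_\theta$ is a $C(\T^m)$-algebra over a finite-dimensional base. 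Each fiber is a simple twisted group \ca{} of $\Z^{2d}/R$ (simple because the induced skew bicharacter is nondegenerate, $R$ being precisely the radical), hence a simple, unital, infinite-dimensional, nuclear \ca{}; by Phillips' theorem that simple noncommutative tori are $A\T$-algebras of real rank zero---together with its extension to twisted group \ca{s} of finitely generated abelian groups---these fibers are $\mathcal{Z}$-stable. By the Hirshberg--R{\o}rdam--Winter theorem, a $C(X)$-algebra over a finite-dimensional $X$ with all fibers $\mathcal{Z}$-stable is itself $\mathcal{Z}$-stable, so $A_\theta$ is $\mathcal{Z}$-stable.

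I expect the one genuine difficulty to lie in part~(1): the radical $R$ need not be a direct summand of $\Z^{2d}$, so one cannot simply split off a simple noncommutative torus as a tensor factor, and the quotient $\Z^{2d}/R$ may carry torsion---hence the need to pass through the $C(\T^m)$-algebra structure and to use a version of Phillips' classification for simple twisted group \ca{s} of finitely generated (not necessarily free) abelian groups. Part~(2), by contrast, is essentially immediate from \Cref{prop:radius-homogeneous,prop:rational-torus-homogeneous} once the degenerate case has been absorbed.
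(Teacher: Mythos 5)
Your part (2) is exactly the paper's argument: invoke \Cref{prop:rational-torus-homogeneous} and then Toms' bound \Cref{prop:radius-homogeneous} to get $\rc(A_\theta)\leq (2d-2)/(2n)=(d-1)/n$. Your extra discussion of degenerate rational $\theta$ (allowing $h_i=0$, $A_0\cong C(\T^2)$, convention $\gcd(0,r)=r$) is a harmless refinement that the paper does not spell out; in the intended application $\theta=M^tJM$ with $M$ invertible is automatically nondegenerate.

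Part (1) is where you diverge from the paper, and where there is a genuine gap. The paper settles the non-rational case in one line: by \cite[Theorem~1.5]{BlaKumRor92} every non-rational $A_\theta$ is approximately divisible, and \cite[Lemma~3.8]{BlaKumRor92} together with \cite[Proposition~6.2]{EllRobSan11} then yields strict comparison. Your route---realize $A_\theta$ as a $C(\T^m)$-algebra over its center, identify the fibers as simple twisted group $\mathrm{C}^*$-algebras of $\Z^{2d}/R$ with nondegenerate induced pairing, deduce $\mathcal{Z}$-stability of the fibers, and conclude via Hirshberg--R{\o}rdam--Winter and R{\o}rdam's theorem---is structurally sound, and the assertions about the center, the nondegeneracy of the pairing on $\Z^{2d}/R$, and simplicity of the fibers (Slawny) are fine. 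But the decisive step, $\mathcal{Z}$-stability of the fibers, is exactly where the argument is not carried out: Phillips' theorem covers simple noncommutative tori, i.e.\ twisted group algebras of \emph{free} abelian groups, whereas, as you yourself note, $\Z^{2d}/R$ may have torsion; the ``extension to twisted group $\mathrm{C}^*$-algebras of finitely generated abelian groups'' you invoke is not a citable theorem as stated, so the proof is incomplete precisely at the point you flag as the difficulty. (The parenthetical claim that $\mathcal{Z}$-stability ``underlies'' the comparison results of Rieffel used in \cite{jalu20} is also inaccurate: those arguments rest on Rieffel's projective-module and cancellation results and predate $\mathcal{Z}$-stability.) The gap is repairable---for instance via classification-type results giving finite nuclear dimension or $\mathcal{Z}$-stability for such simple twisted group algebras, or much more directly by quoting approximate divisibility of non-rational noncommutative tori as the paper does, which makes the whole $C(\T^m)$-fibration unnecessary---but as written part (1) does not constitute a complete proof.
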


\begin{proof}
(1): We know from \cite[Theorem~1.5]{BlaKumRor92} that $A_{\theta}$ is approximately divisible. Then, a combination of \cite[Lemma~3.8]{BlaKumRor92} and \cite[Proposition~6.2]{EllRobSan11} shows that $A_\theta$ has strict comparison.

(2):
By \Cref{prop:rational-torus-homogeneous}, $A_\theta$ is $n$-homogeneous with spectrum $\T^{2d}$.
Applying \Cref{prop:radius-homogeneous}, we get
\[
\rc(A_\theta)
\leq \frac{\dim(\T^{2d})-2}{2n}
= \frac{d-1}{n},
\]
as desired.
\end{proof}

%==========================================================================================
%==========================================================================================
\section{Proofs of main results}

%==========================================================================================
The time-frequency shifts defined in \eqref{eq:time-frequency-shift} satisfy the commutation relation
\begin{equation}
\label{eq:commutation-relation}
\pi(w)\pi(z) = e^{2\pi i \Omega(z,w)} \pi(z)\pi(w), 
\qquad \text{ for } z,w \in \R^{2d}. 
\end{equation}

Let $\Gamma$ be a lattice in $\R^{2d}$. We consider the \ca{} $C_\pi^*(\Gamma)$ generated by $\pi(\Gamma) = \{ \pi(\gamma) : \gamma \in \Gamma \}$, that is, the smallest operator norm-closed, self-adjoint algebra of operators on $L^2(\R^d)$ that contains $\pi(\Gamma)$. 
Say that the lattice is represented by a matrix $M \in GL(2d,\R)$ and let $\theta = M^t J M$ be the associated real skew-symmetric $2d \times 2d$ matrix. 
The entries of $\theta$ are given by $\theta_{i,j} = \Omega(Me_i,Me_j)$ where $e_i$ denotes the $i$th standard basis vector of $\R^{2d}$. 
Hence, according to \eqref{eq:commutation-relation}, the unitary operators $u_i = \pi(Me_i)$, $1 \leq i \leq 2d$, generating $C_\pi^*(\Gamma)$ satisfy the commutation relations \eqref{eq:nc-torus-relations}. 
They thus define a representation of $A_{\theta}$ on $L^2(\R^d)$ which can be shown to be faithful (see e.g.\ \cite[Proposition~2.2]{ri88}).
 Hence the \ca{} $C^*(\pi(\Gamma))$ is isomorphic to $A_\theta$ for $\theta = M^t J M$.

Note that if $\Gamma$ is represented by another matrix $M' \in \mathrm{GL}(2d,\R)$, then $\theta' = (M')^t J M$ is congruent to $\theta$, so the associated noncommutative torus $A_{\theta'}$ is isomorphic to $A_\theta$. Hence the lattice $\Gamma$ uniquely determines a noncommutative torus. Furthermore, $\Gamma$ is rational if and only if $A_\theta$ is rational.

In \cite{ri88}, Rieffel shows that the Schwartz space $\mathcal{S}(\R^d)$ can be completed into a finitely generated projective module $\mathcal{E}_{\Gamma}$ over $C_\pi^*(\Gamma)$ where the action is implemented by time-frequency shifts. Since the module is finitely generated and projective, it is represented by a projection in a matrix algebra over $C_\pi^*(\Gamma)$. We make the following notation for this projection.

%==========================================================================================
\begin{notation}
\label{not:projection}
We denote by $p_\Gamma \in \mathrm{M}_\infty(C_\pi^*(\Gamma))$ the projection representing the Heisenberg module $\mathcal{E}_\Gamma = \overline{\mathcal{S}(\R^d)}$ over $C_\pi^*(\Gamma)$. That is, if $p_\Gamma \in \mathrm{M}_n(C_\pi^*(\Gamma))$ for some $n \in \N$, then $\mathcal{E}_\Gamma \cong C_\pi^*(\Gamma)^n p_\Gamma$.
\end{notation}

By \cite[Theorem~3.4]{ri88} we have that
\begin{equation}
\label{eq:covolume-projection}
\tau(p_\Gamma) = \covol(\Gamma) 
\end{equation}
where $\tau$ denotes the canonical tracial state on $C_\pi^*(\Gamma)$.

In \cite{lu09,lu11} (see also \cite[Theorem A (iii)]{AuEn20}) Luef established the link between Gabor frames with windows in the Schwartz class and generators of these modules.

%==========================================================================================
\begin{proposition}
\label{prop:gabor_frame_projection}
The following are equivalent:
\begin{enumerate}
\item 
$\Gamma$ admits a $k$-multiwindow Gabor frame with windows in the Schwartz space.
\item 
$p_\Gamma \precsim 1_{A_\theta}^{\oplus k}$.
\end{enumerate}
\end{proposition}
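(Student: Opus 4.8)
The plan is to prove \Cref{prop:gabor_frame_projection} by combining Rieffel's module-theoretic description of Gabor frames with the translation of frame conditions into comparison of projections, which is essentially the content of \cite{lu09,lu11,AuEn20}. First I would recall Rieffel's Heisenberg module picture more carefully: the completion $\mathcal{E}_\Gamma = \overline{\mathcal{S}(\R^d)}$ is a right Hilbert $C_\pi^*(\Gamma)$-module equipped with a $C_\pi^*(\Gamma)$-valued inner product, and the key point is that for $g \in \mathcal{S}(\R^d)$ the $C_\pi^*(\Gamma)$-valued inner product $\langle g, g\rangle_{C_\pi^*(\Gamma)}$ equals (up to the normalization by $\covol(\Gamma)$ built into Rieffel's conventions) the Gabor frame operator expressed as an element of the noncommutative torus. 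Concretely, $\pi(\Gamma)g$ is a Gabor frame for $L^2(\R^d)$ if and only if $\langle g, g\rangle_{C_\pi^*(\Gamma)}$ is invertible in $C_\pi^*(\Gamma)$; this is the Janssen-duality/Rieffel fact that makes the whole correspondence work. The multi-window version says that $(\pi(\gamma)g_j)_{\gamma \in \Gamma, 1\le j\le k}$ is a frame if and only if $\sum_{j=1}^k \langle g_j, g_j\rangle_{C_\pi^*(\Gamma)}$ is invertible.

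The next step is to convert this invertibility statement into the subequivalence $p_\Gamma \precsim 1_{A_\theta}^{\oplus k}$. For the direction (1)$\Rightarrow$(2): given Schwartz windows $g_1, \ldots, g_k$ yielding a frame, assemble the column $g = (g_1, \ldots, g_k)^t$, viewed as an element of $\mathcal{E}_\Gamma^{\oplus k} = \overline{\mathcal{S}(\R^d)}^{\oplus k}$, which is the Hilbert module $C_\pi^*(\Gamma)^k \cong 1_{A_\theta}^{\oplus k} A_\theta^k$ — here I use that $\mathcal{E}_\Gamma^{\oplus k}$ is the free module (since the single-window frame condition built into Rieffel's setup makes $\mathcal{S}(\R^d)$ generate $\mathcal{E}_\Gamma$; more to the point, $k$ copies of the standard module stack to the free module of rank $k$). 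The frame condition says precisely that $\langle g, g\rangle_{C_\pi^*(\Gamma)} = \sum_j \langle g_j, g_j \rangle_{C_\pi^*(\Gamma)}$ is invertible, so after rescaling by the square root of its inverse we obtain an element $\tilde g$ of $\mathcal{E}_\Gamma^{\oplus k}$ with $\langle \tilde g, \tilde g\rangle = 1$. Then the map $\mathcal{E}_\Gamma \to \mathcal{E}_\Gamma^{\oplus k}$ sending the generator of $\mathcal{E}_\Gamma$ appropriately, or rather the standard module-theoretic argument, yields a partial isometry in $\mathrm{M}_\infty(C_\pi^*(\Gamma))$ implementing $p_\Gamma \precsim 1_{A_\theta}^{\oplus k}$: concretely $\mathcal{E}_\Gamma$ becomes an orthogonally complemented submodule of the free module $A_\theta^k$, and a projection onto a complemented submodule of a free module of rank $k$ is Murray–von Neumann subequivalent to $1_{A_\theta}^{\oplus k}$. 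For (2)$\Rightarrow$(1): a subequivalence $p_\Gamma \precsim 1_{A_\theta}^{\oplus k}$ realizes $\mathcal{E}_\Gamma$ as a complemented submodule of $A_\theta^k$; composing with the canonical dense inclusion $\mathcal{S}(\R^d)^{\oplus k} \hookrightarrow \mathcal{E}_\Gamma^{\oplus k}$ and using that Schwartz vectors are dense together with an open-condition (invertibility is open) perturbation argument, one produces Schwartz windows $g_1, \ldots, g_k$ whose total frame operator is invertible, i.e.\ a $k$-multiwindow Schwartz Gabor frame.

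I would then note that all of this is already available in the literature: the precise statement is \cite[Theorem~4.5]{lu09} in combination with \cite{lu11}, and it is restated as \cite[Theorem~A~(iii)]{AuEn20}; since the proposition is cited as being due to Luef, the cleanest write-up simply assembles these references and spells out the identification of $C_\pi^*(\Gamma)$ with $A_\theta$ (already done in the paragraph preceding \Cref{not:projection}) and of the free module $A_\theta^k$ with $1_{A_\theta}^{\oplus k} A_\theta^k$. The main obstacle, and the only genuinely nontrivial input, is the equivalence between the analytic frame inequality and the algebraic invertibility of the $C_\pi^*(\Gamma)$-valued inner product — this rests on Rieffel's imprimitivity-bimodule machinery for Heisenberg modules and on the fact that the frame operator of a Gabor system with Schwartz window lies in (a matrix algebra over) the noncommutative torus, which in turn uses the rapid decay of Schwartz functions to guarantee convergence. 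Once that analytic bridge is in place, the passage to $\precsim$ is a formal Hilbert-module argument, and the density/perturbation step in the converse direction is routine.
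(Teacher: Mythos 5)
The paper does not actually prove this proposition: it is quoted from Luef \cite{lu09,lu11} and \cite{AuEn20}, which are exactly the references you assemble, and the architecture you describe (frame property $\Leftrightarrow$ the windows form a module frame for, equivalently generate, the Heisenberg module $\mathcal{E}_\Gamma$; then projectivity turns a generating $k$-tuple into a split surjection from the free module of rank $k$, giving $p_\Gamma \precsim 1^{\oplus k}$; density of Schwartz vectors plus openness of invertibility of the module frame operator for the converse) is the standard route behind those references. So your overall plan is the right one and matches the paper's (purely citational) treatment.

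Two details in your sketch are, however, wrong and should be repaired. First, $\mathcal{E}_\Gamma^{\oplus k}$ is \emph{not} the free module $C_\pi^*(\Gamma)^k$: by \Cref{not:projection} it is represented by $p_\Gamma^{\oplus k}$, and if it were free then $p_\Gamma \precsim 1^{\oplus k}$ would hold unconditionally, making item (2) vacuous; this is incompatible with \eqref{eq:covolume-projection}, since $\tau(p_\Gamma)=\covol(\Gamma)$ can exceed $k$. The statement ``$k$ copies of the standard module stack to the free module of rank $k$'' has no basis, and the element $\tilde g$ with inner product $1$ that you build from it does not directly implement the subequivalence you need. The correct formal step is the one you mention in passing: invertibility of the module frame operator makes the synthesis map $C_\pi^*(\Gamma)^k \to \mathcal{E}_\Gamma$, $(a_j)\mapsto \sum_j a_j g_j$, surjective, and projectivity of $\mathcal{E}_\Gamma$ splits it, exhibiting $\mathcal{E}_\Gamma$ as a direct summand of the free module of rank $k$, which is precisely $p_\Gamma \precsim 1^{\oplus k}$. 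Second, the invertibility that characterizes the frame property is not that of the $C_\pi^*(\Gamma)$-valued inner product $\sum_j\langle g_j,g_j\rangle_{C_\pi^*(\Gamma)}$ inside $C_\pi^*(\Gamma)$: via the Janssen representation the Gabor frame operator acts on $\mathcal{E}_\Gamma$ as the module frame operator, i.e.\ as an element of $\mathrm{End}_{C_\pi^*(\Gamma)}(\mathcal{E}_\Gamma)$, which is (a copy of) the noncommutative torus of the \emph{adjoint} lattice $\Gamma^\circ$, and it is invertibility there that is equivalent to the frame inequalities; invertibility of the $C_\pi^*(\Gamma)$-valued inner product is instead tied to the Riesz-sequence property of the Gabor system. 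With the algebras thus sorted out, your argument coincides with the one in \cite{lu09,lu11,AuEn20} that the paper invokes.
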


We are now in position to prove the theorems from the introduction.

%==========================================================================================
\begin{proof}[Proof of \Cref{thm:2}]
Set $s = (d-1)/n_\Gamma$ if $\Gamma$ is rational and $s = 0$ otherwise, so that by \Cref{prop:radius-nc-tori}, we have that $\rc(C_\pi^*(\Gamma)) \leq s$ in both cases.

Set $k = \lfloor \covol(\Gamma) + s \rfloor + 1$.
Then
\[
\covol(\Gamma) 
< k-s
\leq k - \rc(C_\pi^*(\Gamma)).
\]

By \eqref{eq:covolume-projection} and \Cref{prop:traces-on-torus}, we have
\[
\tau'(p_\Gamma) = \covol(\Gamma)
< k - \rc(C_\pi^*(\Gamma))
= \tau'(1^{\oplus k}) - \rc(C_\pi^*(\Gamma))
\]
for every tracial state $\tau'$ on $C_\pi^*(\Gamma)$. 
We conclude from \Cref{lem:radius} that $p_\Gamma \precsim 1^{\oplus k}$. 
By \Cref{prop:gabor_frame_projection} we infer that $\Gamma$ admits a $k$-multiwindow Gabor frame with windows in the Schwartz class.
\end{proof}

%==========================================================================================
\begin{proof}[Proof of \Cref{thm:1}]
The assumption that $\covol(\Gamma) < 1 - (d-1)/n_\Gamma$ implies that $k = 1$, where $k$ is as in \Cref{thm:2}. Hence the result follows.
\end{proof}

%==========================================================================================
%==========================================================================================
\section{Applications}

%==========================================================================================
In this section we provide some examples where \Cref{thm:1} can applied. First we reformulate \Cref{thm:1} in terms of the order and invariant factors of a rational lattice.

%==========================================================================================
\begin{proposition}
\label{prop:condition-invariant-factors}
Let $\Gamma \subseteq \R^{2d}$ be a rational lattice with order $r$ and invariant factors $h_1, \ldots, h_d$. 
If
\begin{equation}
    h_1 \cdots h_d < r^d - (d-1) \gcd(r,h_1) \cdots \gcd(r,h_d), \label{eq:inequality-invariant-factors}
\end{equation}
then $\Gamma$ admits a Schwartz Gabor frame.
\end{proposition}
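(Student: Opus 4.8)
The plan is to recognize that the hypothesis \eqref{eq:inequality-invariant-factors} is precisely the inequality \eqref{eq:main-inequality} of \Cref{thm:1} rewritten in terms of the order and invariant factors of $\Gamma$; the proposition then follows immediately by invoking \Cref{thm:1}. Two quantities appearing in \eqref{eq:main-inequality} need to be translated: $n_\Gamma$ and $\covol(\Gamma)$.

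For $n_\Gamma$ there is nothing to do, as \Cref{prop:rational-lattice} already gives
\[ n_\Gamma = \frac{r^d}{\gcd(h_1,r)\cdots\gcd(h_d,r)}. \]
The substantive step is to express $\covol(\Gamma)$ through $r$ and the $h_i$. Writing $\Gamma = M\Z^{2d}$ with $M \in \mathrm{GL}(2d,\R)$, we have $\covol(\Gamma) = |\det M|$, and the associated matrix $\theta = M^t J M$ satisfies $\det\theta = (\det M)^2 \det J = (\det M)^2$ since $\det J = 1$; in particular $\det\theta > 0$ and $\covol(\Gamma) = \sqrt{\det\theta}$. Because the determinant is unchanged under congruence by an element of $\mathrm{GL}(2d,\Z)$, the skew normal form \eqref{eq:block_form} yields
\[ \det(r\theta) = \det\begin{pmatrix} 0 & B \\ -B & 0 \end{pmatrix} = (\det B)^2 = (h_1\cdots h_d)^2, \]
and from $\det(r\theta) = r^{2d}\det\theta$ we conclude $\covol(\Gamma) = \sqrt{\det\theta} = (h_1\cdots h_d)/r^d$ (note that $h_1 \cdots h_d \neq 0$, consistently with $\det\theta \neq 0$).

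With these two formulas in hand, \eqref{eq:main-inequality} becomes
\[ \frac{h_1\cdots h_d}{r^d} < 1 - \frac{(d-1)\gcd(h_1,r)\cdots\gcd(h_d,r)}{r^d}, \]
which, after multiplying by $r^d > 0$, is exactly \eqref{eq:inequality-invariant-factors}. Thus \Cref{thm:1} applies and produces a Schwartz Gabor frame for $\Gamma$, completing the argument. The only delicate point is the determinant bookkeeping in the middle paragraph — namely that $\det J = 1$, that $\det\begin{pmatrix} 0 & B \\ -B & 0 \end{pmatrix} = (\det B)^2$, and that congruence by matrices in $\mathrm{GL}(2d,\Z)$ preserves the determinant (this last fact also showing that the formula $\covol(\Gamma) = (h_1\cdots h_d)/r^d$ is independent of the choice of representing matrix $M$). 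All of this is routine linear algebra, so I anticipate no genuine obstacle: the proposition is essentially \Cref{thm:1} restated in the notation of \Cref{sec:rational_lattices}.
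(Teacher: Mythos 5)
Your proposal is correct and follows the same route as the paper: compute $\covol(\Gamma) = (\det\theta)^{1/2} = (h_1\cdots h_d)/r^d$ via the skew normal form, combine with the formula for $n_\Gamma$ from \Cref{prop:rational-lattice}, and observe that \eqref{eq:main-inequality} becomes \eqref{eq:inequality-invariant-factors}, so \Cref{thm:1} applies. Your determinant bookkeeping (including $|\det M|$ and invariance under $\mathrm{GL}(2d,\Z)$-congruence) is exactly the content of the paper's short computation.
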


\begin{proof}
Let $\Gamma = M \Z^{2d}$, $\theta = M^t J M$ and let $R \in \mathrm{GL}(2d,\Z)$ be such that $R^t(r\theta)R$ is in its skew normal form. Then
\begin{align*}
\covol(\Gamma) 
&= \det M = (\det \theta)^{1/2} = ( \det R^t \theta R)^{1/2} \\
&= \Big( \det \begin{pmatrix} 0 & r^{-1}B \\ -r^{-1}B & 0 \end{pmatrix} \Big)^{1/2} = \frac{h_1 \cdots h_d}{r^d}. 
\end{align*}
Hence the inequality in \Cref{thm:1} translates into \eqref{eq:inequality-invariant-factors} when combined with \Cref{prop:rational-lattice}.
\end{proof}

%==========================================================================================
\begin{example}
As defined in e.g.\ \cite[Definition~2]{GroHanHeil02}, a lattice $\Gamma\subseteq\mathbb{R}^{2d}$ is \emph{symplectic} if $\Gamma = \alpha M \mathbb{Z}^{2d}$ where $\alpha\in\mathbb{R}\setminus\{ 0\}$ and $M$ is a \emph{symplectic matrix}, that is, $M^t J M=J$. The frame property of $\pi(\Gamma)g$ is equivalent to the frame property of $\pi(\alpha \Z^{2d})(Tg)$ where $T$ is an associated invertible operator, see \cite[Section 9.4]{gr01}. Since a Gabor frame can constructed over $\alpha \Z^{2d}$ using one-dimensional Gaussians whenever $|\alpha| < 1$, it follows that symplectic lattices admit Gabor frames with window in the Schwartz class whenever $| \alpha | < 1$. This can be alternatively shown using \Cref{prop:condition-invariant-factors}. Indeed, note that a rational symplectic lattice of covolume less than $1$ is represented by a matrix $\alpha M$ with $M$ symplectic and $\alpha^2 = \tfrac{p}{q}$ with $p<q$ coprime. It follows that the associated skew-symmetric matrix is $\theta = (\alpha M^t)J(\alpha M)=\alpha^2 J$. In particular, the order of $\theta$ is $r=q^2$. The invariant factors $h_1,\ldots ,h_d$ are all equal to $p^2$.

Now, since $p\leq q-1$, one gets $0<q^{2d}-p^{2d}-(d-1)$ for every $d\geq 1$. Thus, we have
\[
    h_1 \cdots h_d = p^{2d} < q^{2d}-(d-1) = r^d - (d-1) \gcd(r,h_1) \cdots \gcd(r,h_d).
\]

It follows from \Cref{prop:condition-invariant-factors} that $\Gamma$ admits a Schwartz Gabor frame whenever $\covol(\Gamma) < 1$.
\end{example}

%==========================================================================================
\begin{example}
Let $M$ be the $2d \times 2d$ matrix
\[ \begin{pmatrix} 1 & 0 & \cdots & 0 \\ 0 & 1 & \cdots & 0 \\ \vdots & \vdots & \ddots & 0 \\ 0 & 0 & \cdots & 1/q \end{pmatrix} \]
where $q \in \N$. Then
\[ \theta = M^t J M = \begin{pmatrix} 0 & B \\ -B & 0 \end{pmatrix} \]
where $B$ is the $d \times d$ matrix with diagonal $1, \ldots, 1, 1/q$. Thus the order of $\theta$ is $r = q$ while the invariant factors of $r\theta$ are $h_1 = 1$, $h_i = q$ for $2 \leq i \leq d$. Thus by \Cref{prop:condition-invariant-factors} $\Gamma$ admits a Schwartz Gabor frame whenever
\[ q^{d-1} < q^d - (d-1)q^{d-1} , \]
that is, $q > d$. Obviously this fails to hold if $d$ is large relative to $q$.
\end{example}

%==========================================================================================
\begin{example}
Any lattice $\Gamma$ in $\R^{2d}$ with $\covol(\Gamma) < 1/d$ admits a Schwartz Gabor frame. Indeed, if $\covol(\Gamma) < 1 - (d-1)/n_\Gamma$ then the result follows from \Cref{thm:1}. Otherwise, assume that $\covol(\Gamma) \geq 1 - (d-1)/n_\Gamma$, which implies $n_\Gamma<d$. Further, we know that
 \[
 \covol (\Gamma )=\frac{h_1 \cdots h_d}{r^d}=
 \frac{1}{n_\Gamma}\left(\frac{h_1\cdots h_d}{\gcd (h_1, r)\cdots \gcd (h_d , r)}\right)
 \]
 where note that the second factor is an integer. Thus, $\covol (\Gamma )\geq 1/n_\Gamma$. By our previous computations, we obtain $\covol (\Gamma ) \geq 1/d$. This contradicts our assumption.

This result should be compared with \cite[Theorem 1.1]{LuWa23}, which states that almost all lattices with $\covol(\Gamma) < d!/d^d$ admit a Gabor frame with a Gaussian window.
\end{example}

We are currently not aware of a Schwartz Gabor frame over a lattice that violates the inequality in \Cref{thm:1}.

%==========================================================================================
\begin{remark}
We conclude by remarking that the abstract techniques in this paper cannot be used to completely settle \Cref{prob:balian-low-converse}. Indeed, rational noncommutative tori do not have strict comparison, that is, they have strictly positive radius of comparison. For example, it follows from \cite[Corollary~1.2]{EllNiu13RadiusCompCommutative} (see also \cite[Section~4.1]{BlaRobTikTomWin12AlgRC}) that 
\[
\rc(C(\T^{2n+1})) = n-1 ,\quad\text{and}\quad
\rc(C(\T^{2n})) \in \{ n-2, n-1 \}
\]
for each $n\in\N$.

More concretely, one of the main reasons why these techniques do not fully solve \Cref{prob:balian-low-converse} is that, while for this problem it is enough to show that $\tau (p_\Gamma) <\tau (1)$ implies $p_\Gamma\precsim 1$ (\Cref{prop:gabor_frame_projection}), the radius of comparison studies tracial comparison (in the sense of \Cref{lem:radius}) of all pairs of positive elements. Thus, a more detailed analysis of the canonical trace and the distinguished projection $p_\Gamma$ is needed.
\end{remark}

\bibliography{bibl}
\bibliographystyle{abbrv}

\end{document}